\documentclass[12pt]{amsart}

%%%%%%%%%%%%%%%%%%%%%%%%%

\usepackage{amsfonts,amsmath,amsthm}
\usepackage{latexsym}

%%%%%%%%%%%%%%%%%%%%%%%%%

\newtheorem{theorem}{Theorem}[section]
\newtheorem{corollary}[theorem]{Corollary}
\newtheorem{lemma}[theorem]{Lemma}

\newtheorem{conjecture}[theorem]{Conjecture}

\newtheorem{definition}{Definition}[section]

\theoremstyle{definition}

%%%%%%%%%%%%%%%%%%%%%%%%%

\newcommand\ran{\mathrm{Ran}\:}
\newcommand{\TT}{\ensuremath{\mathbb{T}}}
\newcommand{\Q}{\ensuremath{\mathbb{Q}}}
\newcommand{\ZZ}{\ensuremath{\mathbb{Z}}}
\newcommand{\DD}{\ensuremath{\mathbb{D}}}
\newcommand{\R}{\ensuremath{\mathbb{R}}}
\newcommand{\K}{\ensuremath{\mathbb{K}}}
\newcommand{\Co}{\ensuremath{\mathbb{C}}}

\def \e {\varepsilon}

\def \< {\langle}
\def \> {\rangle}

\def \span {{\rm span}}
\def \dim  {{\rm dim}}

\def \vu {{\mathbf u}}
\def \vx {{\mathbf x}}
\def \vq {{\mathbf q}}

\def \vy {{\mathbf y}}
\def \vv {{\mathbf v}}
\def \vw {{\mathbf w}}

\def \ma {{\mathbf A}}

%%%%%%%%%%%%%%%%%%%%%%%%%

\begin{document}

\title[A note on the pyjama problem]{A note on the pyjama problem}

\author[R. D. Malikiosis]{R. D. Malikiosis}
\address{R. D. M.: School of Physical and Mathematical Sciences,
Nanyang Technological University, 21 Nanyang Link, 637371
Singapore}
\email{rmalikiosis@ntu.edu.sg}

\author[M. Matolcsi]{M. Matolcsi}
\address{M. M.: Alfr\'ed R\'enyi Institute of Mathematics,
Hungarian Academy of Sciences POB 127 H-1364 Budapest, Hungary
Tel: (+361) 483-8307, Fax: (+361) 483-8333}
\email{matomate@renyi.hu}

\author[I. Z. Ruzsa]{I. Z. Ruzsa}
\address{I. R.: Alfr\'ed R\'enyi Institute of Mathematics,
Hungarian Academy of Sciences POB 127 H-1364 Budapest, Hungary
Tel: (+361) 483-8328, Fax: (+361) 483-8333}
\email{ruzsa@renyi.hu}

\thanks{M.M. and I.Z.R. were supported by the ERC-AdG 228005,
and OTKA Grants No. K81658, and M.M. also by the Bolyai
Scholarship. R.D.M. was partially supported by Singapore MOE grant MOE2011-T2-1-090.}

%%% \date{\today}

\begin{abstract}
This note concerns the so-called pyjama problem, whether it is
possible to cover the plane by finitely many rotations of vertical
strips of half-width $\varepsilon$. We first prove that there
exist no periodic coverings for $\e<\frac{1}{3}$. Then we describe
an explicit (non-periodic)  construction for $\e
=\frac{1}{3}-\frac{1}{48}$. Finally, we use a compactness argument
combined with some ideas from additive combinatorics to show that
a finite covering exists for $\e =\frac{1}{5}$. The question
whether $\e$ can be arbitrarily small remains open.
\end{abstract}

\maketitle

\bigskip

\section{introduction}

This note concerns a question that has been advertised as the
"pyjama-problem" in the additive combinatorics community. The
problem was originally  raised in \cite{ikm}, and we recall it
here for convenience. Let $\|x\|$ denote the distance of any real
number $x$ to the closest integer, and define the following set of
equidistant vertical strips of width $2\e$ on $\R^2$:
\[E_{\e}:=\{(x,y)\in \R^2 : \|x\|\le \e\}.\]
Denote by $R_{\theta}$ the counterclockwise rotation of the plane
by angle $\theta$ (around the origin). The question is whether we
can cover the plane by the union of finitely many rotates of
$E_\e$, i.e. whether there exist angles $\theta_0, \dots \theta_n$
such that $\R^2=\bigcup_{j=0}^nR_{\theta_j}E_{\e}.$ We will assume
throughout this note (without loss of generality) that the angles
$\theta_0, \dots \theta_n$ are pairwise distinct.

\medskip

We make a few remarks on the origin of the problem. In
\cite{fkw} Furstenberg, Katznelson and Weiss proved that for any
set $A$ of positive upper density in $\R^2$ there exists a
threshold $t_0\in \R$ such that for any $t\ge t_0$  there exist
points in $A$ with distance $t$. Another proof was given by
Falconer and Marstrand in \cite{fm}. Subsequently, Bourgain
\cite{bourgain} used a Fourier analytic argument to generalize the
result in higher dimensions: a set $A$ of positive upper density
in $\R^k$ contains all large enough copies of any $k-1$
dimensional simplex. Kolountzakis \cite{kolountzakis} used a
similar argument to prove that even if the unit ball is
non-Euclidean (but smooth), all large enough distances appear
between points of $A$. In this circle of problems, a natural
question of Sz. R\'ev\'esz (private communication) was whether it
is true in $\R^2$ that finitely many {\it rotations} of the difference
set $A-A$ can cover the plane, with the exception of a bounded
set. This would have given a generalization of the above mentioned
result of \cite{fm, fkw}. However, the question was answered in
the negative in \cite{ikm}, where $A$ was taken to be a set of
small disks around points of the integer lattice.  The
"pyjama-problem" was formulated and left open in \cite{ikm}. For
further related problems concerning extremal properties of sets of
positive upper density we refer to \cite{reveszdoktori}.

\medskip

Let us introduce some notations and definitions.

\begin{definition}\label{def1}
We will say that $\e$ has the {\it finite rotation property} if
there exist angles $\theta_0, \dots \theta_n$ such that
$\R^2=\bigcup_{j=0}^nR_{\theta_j}E_{\e}.$ Let $\e_0$ denote the
infimum of the values of $\e$ having the finite rotation property.
\end{definition}

We believe that $\e$ can be arbitrarily small.

\begin{conjecture}
With notation introduced above, we have $\e_0=0$.
\end{conjecture}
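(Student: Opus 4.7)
The claim $\e_0=0$ is stated as a conjecture and, as the authors' abstract emphasizes, is genuinely open; the following is a plan for an attack rather than a proof I believe goes through end to end.

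The plan is to bootstrap the compactness-plus-additive-combinatorics argument used in the paper to reach $\e = 1/5$ into an iteration that drives $\e$ below any prescribed positive value. Write $\vu_\theta := (\cos\theta, \sin\theta)$, so that $R_\theta E_\e = \{\vx \in \R^2 : \|\vx \cdot \vu_\theta\| \le \e\}$. If the angle set $\Theta = \{\theta_0, \dots, \theta_n\}$ witnesses the finite rotation property at $\e$, then the near-uncovered set
\[
U_\d(\Theta) := \{\vx \in \R^2 : \min_j \|\vx \cdot \vu_{\theta_j}\| > \e - \d\}
\]
is a locally finite union of parallelogram-like cells cut out by the translated strip boundaries. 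I would study the enlarged family $\Theta \cup (\Theta + \f_1) \cup \cdots \cup (\Theta + \f_m)$ and try to show that a suitable choice of the supplementary angles $\f_\ell$ makes the enlarged family witness the property at $\e - \d$ for some $\d > 0$ independent of the starting $\e$.

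First I would make the limiting geometry of $U_\d(\Theta)$ as $\d \to 0$ combinatorially explicit, using the compactness argument of the paper to extract a finite list of ``bottleneck'' cells that the supplementary angles must simultaneously resolve. Second I would translate that resolution requirement into an additive problem on the unit circle about $\{\vu_{\theta_j}\}$ and seek a Freiman-type statement guaranteeing the existence of a bounded number of supplementary rotations with the required covering behaviour on every bottleneck. Third I would iterate, aiming for quantitative bounds of the form $\e_k \le C\rho^k$ with $\rho<1$ and $n_k$ polynomial in $k$, which would yield $\e_0 = 0$.

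The principal obstacle, and in my view the reason the conjecture has resisted proof, is the structure-versus-randomness dichotomy underlying the middle step: in discrete analogues one would appeal to Freiman or Pl\"unnecke--Ruzsa type theorems, but in the continuous setting with the full circle of rotations acting on $\R^2$ the available structural theorems do not seem sharp enough to control the supplementary-angle problem uniformly at small $\e$. A secondary, quantitative obstacle is that a naive iteration may cause the angle count to blow up super-polynomially in $1/\e$, terminating the bootstrap before $\e$ reaches zero. A genuinely new input --- perhaps a Fourier-analytic estimate on the indicator of one rotated strip family, sharp enough to exhibit cancellation across many rotations --- seems to be required to make the plan go through.
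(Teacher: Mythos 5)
The statement you are addressing is a conjecture: the paper proves only $\e_0 \le \frac{1}{5}$ and explicitly leaves $\e_0=0$ open, so there is no proof in the paper to compare against, and you are right not to claim one. Your plan is essentially the bootstrap the authors themselves describe in their first closing remark: given a covering at half-width $\e$, shrink to some $\rho<\e$ so that the uncovered region is small, and apply Theorem \ref{thm:main}. The authors note that even if this could be carried out for \emph{every} covering it would only show that the set of $\e$ with the finite rotation property is open, and ``it would still not prove that $\e_0=0$.'' The extra ingredient in your proposal --- a decrement $\d$ \emph{uniform} in $\e$, or a geometric rate $\e_k\le C\rho^k$ --- is precisely what is missing, and nothing in your sketch supplies it. The hypothesis of Theorem \ref{thm:main} is that $X_\e-X_\e$ fail to be dense in the subgroup $S\subset \TT^n$ determined by the rational relations among the $\vu_j$; as the angle count grows, $S$ becomes a higher-dimensional torus and $X_\e$ occupies a larger fraction of it, so there is no a priori reason the achievable shrinkage does not decay to zero before $\e$ does.

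A second concrete difficulty your plan glosses over is the arithmetic of the supplementary angles. The paper's negative results constrain the choice severely: fully periodic families (equivalently, $\vu_j$ spanning a two-dimensional $\Q$-space) can never beat $\frac{1}{3}$, while $\Q$-independent families can never beat $\frac{1}{2}$. The enlarged family $\Theta\cup(\Theta+\f_1)\cup\cdots\cup(\Theta+\f_m)$ must therefore be engineered to have exactly the right \emph{partial} rational dependencies --- this is what the explicit $\e=\frac{1}{3}-\frac{1}{48}$ construction achieves through the relation $3(s_4+s_7)-2s_1=0$ --- and your appeal to Freiman or Pl\"unnecke--Ruzsa type structure theorems is, as you yourself acknowledge, a hope rather than an argument. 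In short: you correctly identify the statement as open and locate the bottleneck plausibly, but the proposal does not prove the conjecture and does not go beyond the programme already outlined in the authors' concluding remarks.
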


Let $\vu_j=(\cos \theta_j, \sin \theta_j)\in \R^2$ be the unit
vector corresponding to the angle $\theta_j$. It is easy to see
that a vector $\vx\in \R^2$ is covered by $R_{\theta_j}E_{\e}$ if
and only if $\| \langle \vu_j, \vx \rangle \| \le \e$. The pyjama
problem can therefore be formulated in an equivalent way as
follows:

\medskip

\noindent For a given $\e >0$ we want to find unit vectors $\vu_1,
\dots \vu_n\in \R^2$ such that for all $\vx \in \R^2$ there exists
a $\vu_j$ such that $\| \langle \vu_j, \vx \rangle \| \le \e$.

\medskip

The case $\e=\frac{1}{3}$ is "trivial". Indeed, the rotations by
angles $0, \frac{2\pi}{3}, \frac{4\pi}{3}$ will suffice, as the
reader can easily verify. As we shall see, it is not at all
trivial to go below $\e=\frac{1}{3}$. The above covering by
rotations $0, \frac{2\pi}{3}, \frac{4\pi}{3}$ is {\it periodic}.
One natural approach is to consider other periodic arrangements of
the strips (e.g. angles corresponding to Pythagorean triples). We
will make the concept of periodicity rigorous in Section
\ref{sec:per}, and prove that it can never work for any
$\e<\frac{1}{3}$.

\medskip

Another natural approach is to consider angles corresponding to
$N$th roots of unity for some $N$. It can be proven, however,
that we will not get a covering of $\R^2$ in
this manner for any $N$ and any $\e<\frac{1}{3}$. We will not include the
proof of this negative result to keep this note brief.

\medskip

A random set of angles will not lead to a covering for any
$\e<\frac{1}{2}$, almost surely. The reason is that the numbers
$\cos \theta_0, \dots, \cos\theta_n$ will be almost surely
independent over $\Q$, and therefore the set $\R (\cos
\theta_0, \dots, \cos\theta_n)$, mod 1, will be dense in the
torus $\TT^{n+1}$, and thus we can find a vector $\vx=(x,0)\in
\R^2$ such that $\| \langle \vu_j, \vx \rangle \| \approx
\frac{1}{2}$ for all $j$. A similar argument shows that we
will not get a covering for any $\e<\frac{1}{2}$ if the vectors
$\vu_0, \dots, \vu_n$ are independent over $\Q$ (in that case one
needs to consider $\vx=(x\cos \varphi, x\sin \varphi)$ for some
appropriate angle $\varphi$ and $x\in \R$).

\medskip

In Section \ref{sec:constr} we will show a specific finite set of
angles for $\e=\frac{1}{3}-\frac{1}{48}$. Finally, in Section
\ref{sec:compact} we will use a compactness argument combined with
some ideas from additive combinatorics to show that a finite
covering exists for $\e =\frac{1}{5}$. However, this result is
{\it non-constructive}, i.e. we will not be able to exhibit the
appropriate angles.

\section{Periodic covering is not possible for $\e<\frac{1}{3}$}\label{sec:per}

Periodicity is a natural idea to ensure that the whole plane gets
covered by the rotated strips. The reason is that in this case
only the fundamental region (which is a finite parallelogram,
spanned by the period-vectors) needs to be checked. However, we
will now show that $\e$ cannot be smaller than $\frac{1}{3}$ in
such a case.

\begin{definition}\label{defper}
Given $\theta_0, \dots, \theta_n$, and the corresponding unit
vectors $\vu_0, \dots, \vu_n$, a vector $\vv\in \R^2$ is a
period-vector of the set $\bigcup_{j=0}^nR_{\theta_j}E_{\e}$, if
$\langle \vu_j, \vv \rangle \in \ZZ$ for each $j$. The set
$\bigcup_{j=0}^nR_{\theta_j}E_{\e}$ is called (fully) periodic if
it has two $\R$-linearly independent period vectors $\vv_0, \vv_1$.
\end{definition}

Periodicity is directly related to the dimension of the space
spanned by the vectors $\vu_0, \dots, \vu_n$ over $\Q$.

\begin{lemma}\label{lemmaper}
For any $n\ge 1$ the set $\bigcup_{j=0}^nR_{\theta_j}E_{\e}$ is
periodic if and only if $d:=\dim \left(\span \{\vu_0, \dots ,
\vu_n\}_{\Q}\right)=2.$ \end{lemma}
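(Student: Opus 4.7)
The plan is to prove each implication of the biconditional by a short linear-algebra argument.

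For the ``if'' direction, suppose $d=2$. I would choose $\vu_a,\vu_b$ among the $\vu_j$ that form a $\Q$-basis of $\span\{\vu_0,\dots,\vu_n\}_\Q$. Since two parallel unit vectors in $\R^2$ can only differ by a sign, hence are $\Q$-linearly dependent, the vectors $\vu_a,\vu_b$ are automatically $\R$-linearly independent and form an $\R$-basis of $\R^2$. Then the map $T:\R^2\to\R^2$ given by $T(\vv)=(\langle\vu_a,\vv\rangle,\langle\vu_b,\vv\rangle)$ is an $\R$-linear isomorphism. I would write each $\vu_j=p_j\vu_a+q_j\vu_b$ with $p_j,q_j\in\Q$ and let $N$ be a common denominator of all these coefficients. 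Then
\[ \Lambda:=T^{-1}(N\ZZ\times N\ZZ)=\{\vv\in\R^2:\langle\vu_a,\vv\rangle\in N\ZZ,\ \langle\vu_b,\vv\rangle\in N\ZZ\} \]
is a full-rank lattice in $\R^2$, and for any $\vv\in\Lambda$ and any index $j$,
\[ \langle\vu_j,\vv\rangle=p_j\langle\vu_a,\vv\rangle+q_j\langle\vu_b,\vv\rangle\in\ZZ, \]
since $Np_j,Nq_j\in\ZZ$. Hence every vector of $\Lambda$ is a period vector, and any $\ZZ$-basis of $\Lambda$ provides two $\R$-linearly independent ones.

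For the converse, suppose the set has $\R$-linearly independent period vectors $\vv_0,\vv_1$. The map $S:\R^2\to\R^2$ defined by $S(\vu)=(\langle\vu,\vv_0\rangle,\langle\vu,\vv_1\rangle)$ is an $\R$-linear isomorphism, in particular a $\Q$-linear injection. By the definition of period vector, each $S(\vu_j)$ lies in $\ZZ^2\subset\Q^2$, so $S$ sends the $\Q$-space $\span\{\vu_0,\dots,\vu_n\}_\Q$ injectively into $\Q^2$, which yields $d\leq\dim\Q^2=2$. The opposite inequality $d\geq 2$ is the observation that if all $\vu_j$ were $\Q$-collinear they would, by the unit-vector remark above, all equal $\pm\vu_0$, so the union $\bigcup_j R_{\theta_j}E_\e$ would collapse to a single strip, a degenerate configuration implicitly excluded whenever we speak of a genuine union of distinct rotated strips.

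The only mildly delicate step is the clearing of denominators in the forward direction: one single integer $N$ must work simultaneously for all $n+1$ coefficient pairs $(p_j,q_j)$, which is why we take it to be a common denominator of every one of them rather than just of the coefficients expressing a $\Q$-basis. Everything else reduces to the standard fact that the preimage under an $\R$-linear isomorphism of a full-rank lattice is again a full-rank lattice.
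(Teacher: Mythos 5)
Your proof is correct and follows the same underlying strategy as the paper's. In the forward direction both arguments build a common period lattice dual to two $\R$-independent $\vu$'s and clear the denominators of the rational coefficients of the remaining $\vu_j$; the paper does this one vector at a time by induction, you do it in one step with a single common denominator $N$ -- an immaterial difference, though your choice of a genuine $\Q$-basis $\vu_a,\vu_b$ (rather than blindly taking $\vu_0,\vu_1$, which could be antipodal) is the more careful formulation. In the reverse direction the paper fixes $j$ and explicitly solves the integer $2\times2$ system $\ma\,\vq_j=\binom{-m_{j,0}}{-m_{j,1}}$ over $\Q$, whereas you note that $\vu\mapsto(\langle\vu,\vv_0\rangle,\langle\vu,\vv_1\rangle)$ is a $\Q$-linear injection sending each $\vu_j$ into $\Q^2$, so $d\le 2$; this is the same duality packaged as a dimension count, and it avoids having to justify the non-singularity of $\ma$. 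One caveat applies to both proofs: the inequality $d\ge 2$ is not automatic. If $n=1$ and $\theta_1=\theta_0+\pi$, the set is periodic in the sense of Definition \ref{defper} yet $d=1$, so the lemma as literally stated fails for that degenerate configuration; the paper silently ignores this, while you at least flag that it must be excluded.
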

\begin{proof}
Assume $d=2$. As $\vu_0$ and $\vu_1$ are distinct, we can find two
$\R$-linearly independent vectors $\vv_0, \vv_1$ such that $\langle
\vu_j, \vv_k \rangle \in \ZZ$ for $0\le j,k\le 1$. Let
$\vu_2=q_0\vu_0+q_1\vu_1$ where $q_0, q_1\in \Q$, and let $M$
denote the least common multiple of the denominators of $q_0,
q_1$. Then it is straightforward to check that the vectors
$\vw_0=M \vv_0,  \ \vw_1=M \vv_1$ are two period-vectors with
respect to $\vu_0, \vu_1, \vu_2$. We can then proceed by induction
to produce two period vectors with respect to $\vu_0, \dots
\vu_n$.

\medskip

Assume now that $\bigcup_{j=0}^nR_{\theta_j}E_{\e}$ is periodic,
i.e. there exist two linearly independent period vectors $\vv_0,
\vv_1$ such that $\langle \vu_j, \vv_k \rangle := m_{j,k} \in \ZZ$
for each $j=0, \dots, n$ and $k=0,1$. We need to prove that each
$\vu_j$ is a $\Q$-linear combination of $\vu_0$ and  $\vu_1$. Let
us fix $j$. The $2\times 2$ matrix $\ma$ given by
$a_{k,r}=m_{r,k}$ (for $0\le k,r\le 1$) contains integer entries
and is non-singular. Therefore, there exists a vector
$\vq_j=\binom {q_{0,j}}{q_{1,j}}$ with rational coordinates such
that $\ma . \vq_j =\binom{-m_{j,0}}{-m_{j,1}}$. But then the
vector $\vu_j+q_{1,j}\vu_1+q_{0,j}\vu_0$ is orthogonal to both
$\vv_0$ and $\vv_1$, and therefore must be zero.
\end{proof}

\begin{lemma}\label{qextension}
If $n\ge 2$, $\theta_0=0$ and $\bigcup_{j=0}^nR_{\theta_j}E_{\e}$
is periodic, then all $e^{i\theta_j}$ (for $j=0,\dots, n$) belong
to the same quadratic imaginary field.
\end{lemma}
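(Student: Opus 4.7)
The plan is to exploit Lemma~\ref{lemmaper}, which converts the periodicity hypothesis into the algebraic statement $\dim_{\Q}\span\{\vu_0,\dots,\vu_n\}=2$. Since $\theta_0=0$, we have $\vu_0=(1,0)$, and because distinct angles $\theta_0,\dots,\theta_n$ must produce distinct (unoriented) lines (otherwise two of the strips coincide and one is redundant), the vectors $\vu_0,\vu_1$ are $\R$-linearly independent and hence also $\Q$-linearly independent. They therefore form a $\Q$-basis of the two-dimensional $\Q$-span, so every $\vu_j$ admits a representation $\vu_j=a_j\vu_0+b_j\vu_1$ with $a_j,b_j\in\Q$.

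The heart of the argument, which is exactly where the hypothesis $n\ge 2$ enters, is to show that $\cos\theta_1\in\Q$. I would apply the previous step to $j=2$: writing $\vu_2=a\vu_0+b\vu_1$ and expanding $|\vu_2|^2=1$ gives
\[
(a+b\cos\theta_1)^2+b^2\sin^2\theta_1=1,
\]
which collapses to $a^2+2ab\cos\theta_1+b^2=1$ via $\cos^2\theta_1+\sin^2\theta_1=1$. Since $\vu_2$ is parallel to neither $\vu_0$ nor $\vu_1$, both $a$ and $b$ are nonzero, so one can solve to obtain $\cos\theta_1=(1-a^2-b^2)/(2ab)\in\Q$.

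With this rationality in hand, I would set $D:=1-\cos^2\theta_1=\sin^2\theta_1\in\Q$, noting that $D>0$ because $\vu_1\ne\pm\vu_0$, and let $K:=\Q(\sqrt{-D})$. This is $\Q(i)$ if $D$ is a rational square and $\Q(\sqrt{-D_0})$ with $D_0$ the squarefree part of $D$ otherwise; in either case $K$ is an imaginary quadratic field. By construction $e^{i\theta_1}=\cos\theta_1+i\sin\theta_1\in K$. For general $j$, the formulas $\cos\theta_j=a_j+b_j\cos\theta_1\in\Q$ and $\sin\theta_j=b_j\sin\theta_1$ (a rational multiple of $\sqrt{D}$) immediately yield $e^{i\theta_j}\in K$, completing the proof.

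The only conceptual step is the rationality of $\cos\theta_1$, which rests squarely on the hypothesis $n\ge 2$ together with the $|\vu_2|=1$ constraint; everything else is mechanical. The small technical point to monitor is the verification that $ab\ne 0$ and $D>0$, both of which follow from the standing convention that pairwise distinct angles correspond to pairwise non-parallel unit vectors $\vu_j$.
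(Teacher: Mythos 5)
Your proposal is correct and takes essentially the same route as the paper: the paper writes $e^{i\theta_2}=q_0+q_1e^{i\theta_1}$ with $q_0,q_1\in\Q$ nonzero, takes $\lvert\cdot\rvert^2=1$ to get $\cos\theta_1=\frac{1-q_0^2-q_1^2}{2q_0q_1}\in\Q$, and concludes that all $e^{i\theta_j}$ lie in the imaginary quadratic field $\Q(e^{i\theta_1})$ (since $e^{i\theta_1}$ satisfies $x^2-2\cos\theta_1 x+1=0$). Your coordinate computation is the same calculation, and your explicit attention to $ab\ne 0$ (via non-parallel directions) matches the paper's tacit assumption that $q_0,q_1$ are nonzero.
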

\begin{proof}
As we saw in the previous lemma, there are non-zero rational numbers $q_0$,
$q_1$, such that $q_0+q_1e^{i\theta_1}=e^{i\theta_2}.$ Hence
$\lvert q_0+q_1e^{i\theta_1}\rvert^2=1$, which yields that
$\cos\theta_1=\frac{1-q_0^2-q_1^2}{2q_0q_1}$ is rational (and
similarly, all $\cos\theta_j$ are rational).  Furthermore,
$e^{i\theta_1}$ satisfies the quadratic equation
$x^2-2\cos\theta_1x+1=0$. As all $e^{i\theta_j}$ are $\Q$-linear
combinations of $e^{i\theta_0}=1$ and $e^{i\theta_1}$, they all
belong to the same imaginary quadratic field $\Q(e^{i\theta_1})$.
\end{proof}

We are now in position to prove the main (negative) result of this
section.

\begin{theorem}
If $\e<\frac{1}{3}$ and $\bigcup_{j=0}^nR_{\theta_j}E_{\e}$ is
periodic, then it does not cover the whole plane.
\end{theorem}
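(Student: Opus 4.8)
Here is the plan I would follow.

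\emph{Reductions.} After a rotation assume $\theta_0=0$. By Lemma~\ref{lemmaper} periodicity means $d=2$, so $\{\vu_0,\vu_1\}$ is a $\Q$-basis of $\span\{\vu_0,\dots,\vu_n\}$ and $\vu_j=q_{j,0}\vu_0+q_{j,1}\vu_1$ with $q_{j,k}\in\Q$; by Lemma~\ref{qextension} the number $\rho:=\langle\vu_0,\vu_1\rangle=\cos\theta_1$ is rational. As $|\vu_j|=1$, each pair $(q_{j,0},q_{j,1})$ lies on the positive definite \emph{rational} conic $Q(x,y):=x^2+2\rho xy+y^2=1$, which is the norm form $N_{\Q(e^{i\theta_1})/\Q}(x+ye^{i\theta_1})$ of the field in Lemma~\ref{qextension}. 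It will suffice to produce a single $\vx\in\R^2$ with $\langle\vu_j,\vx\rangle\in[\tfrac13,\tfrac23]\pmod 1$ for every $j$, since then $\|\langle\vu_j,\vx\rangle\|\ge\tfrac13>\e$ and $\vx$ lies outside every $R_{\theta_j}E_\e$.

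\emph{Reduction modulo $3$ on the period lattice.} Let $\Lambda$ be the rank-$2$ lattice of period vectors, fix a basis $\vv_0,\vv_1$, and set $m_{j,k}:=\langle\vu_j,\vv_k\rangle\in\ZZ$. For a third-division point $\vx=\tfrac13(a\vv_0+b\vv_1)$ we have $\langle\vu_j,\vx\rangle=\tfrac13(am_{j,0}+bm_{j,1})$, which lies in $[\tfrac13,\tfrac23]\pmod 1$ exactly when $am_{j,0}+bm_{j,1}\not\equiv0\pmod3$. So I want a nonzero $(a,b)\in\mathbb F_3^2$ lying on none of the lines $\ell_j=\{a\,\overline{m_{j,0}}+b\,\overline{m_{j,1}}=0\}$; equivalently, $(a,b)$ should avoid the line through the origin orthogonal to each reduction $\overline{m_j}=(\overline{m_{j,0}},\overline{m_{j,1}})$. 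A covering with $\e<\tfrac13$ would force these reductions to represent all four directions of $\mathbb P^1(\mathbb F_3)$, and it is this that I would contradict.

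\emph{The arithmetic input.} In the coordinates $\vx\mapsto(\langle\vu_0,\vx\rangle,\langle\vu_1,\vx\rangle)$ the period vectors have integer entries, so $m_j=V^{\mathsf T}(q_{j,0},q_{j,1})^{\mathsf T}$ for a fixed $V\in M_2(\ZZ)$, and therefore the $m_j$ are integer points on one positive definite rational conic $\widetilde Q(m)=Q\big((V^{\mathsf T})^{-1}m\big)=1$. Write $c_0\widetilde Q$ for the $3$-primitive integral multiple of $\widetilde Q$; then every $\overline{m_j}$ satisfies $\overline{c_0\widetilde Q}(\overline{m_j})=\overline{c_0}$ in $\mathbb F_3$. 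Now a nonzero binary quadratic form over $\mathbb F_3$ is, up to scaling and change of variables, one of $x^2+y^2$, $xy$, $x^2$; running through these shows that the set $\{\overline{c_0\widetilde Q}=\overline{c_0}\}$ represents at most three of the four directions of $\mathbb P^1(\mathbb F_3)$ (and its zero-cone, should $\overline{c_0}=0$, at most two). Hence, \emph{provided no $\overline{m_j}$ vanishes}, some direction is missing among the $\overline{m_j}$; its orthogonal line equals no $\ell_j$, so the two nonzero points of that line give the sought uncovered third-division point.

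\emph{Main obstacle.} The delicate parts will be the $3$-adic bookkeeping needed to justify the previous paragraph and, above all, the degenerate case $\overline{m_j}=0$ for some $j$ --- equivalently $\vu_j\in 3\cdot(\text{lattice generated by }\vu_0,\dots,\vu_n)$ --- where the third-division points say nothing, because then $\ell_j$ is all of $\mathbb F_3^2$. In that case the relation $\vu_j=3w_j$ forces some $\vu_j$ to be $3$ times a short lattice vector, which makes the $\vu_i$ span a very ``thin'' lattice; one then has to show directly that such a configuration cannot cover the plane (in fact not even for $\e<\tfrac12$). Finally, the modulus $3$ and the number $\tfrac13$ are forced: $[\tfrac13,\tfrac23]$ has length $\tfrac13$ and $\mathbb P^1(\mathbb F_3)$ has $4$ points, which is exactly why the borderline periodic example --- rotations by $0,\tfrac{2\pi}{3},\tfrac{4\pi}{3}$, whose directions lie in $\Q(\sqrt{-3})$ and whose $\overline{m_j}$ represent exactly $3$ of the $4$ directions --- sits precisely at $\e=\tfrac13$.
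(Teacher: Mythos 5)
Your strategy is genuinely different from the paper's. The paper (via Lemma~\ref{qextension}) writes $(\cos\theta_j,\sin\theta_j)=(\frac{m_j}{n_j},\frac{k_j\sqrt D}{n_j})$ with $m_j^2+Dk_j^2=n_j^2$ and works modulo a prime $p$ dividing $D$ (or $p=2$ when $D=1$): the second coordinate of the test point is chosen as a multiple of $\sqrt D$, so the $\sin$--contribution becomes an integer and only a one--variable congruence $m_j\equiv\pm n_j\pmod p$ remains, giving an explicit uncovered point at distance $\frac{p-1}{2p}\ge\frac13$. You instead work modulo $3$ unconditionally on the period lattice and argue that the reductions $\overline{m_j}\in\mathbb{F}_3^2$, being constrained to a single conic level set, cannot hit all four directions of $\mathbb{P}^1(\mathbb{F}_3)$. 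That counting argument is correct as far as it goes (a nonzero binary form over $\mathbb{F}_3$ has each nonzero level set, and its zero cone, meeting at most three directions), and the passage from a missing direction to an uncovered third--division point is sound.

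However, the degenerate case you flag is a genuine gap, not a technicality, and it is not closed by your sketch. Taking $\Lambda$ to be the full period lattice, $\Lambda^*$ is the $\ZZ$-span $L$ of $\vu_0,\dots,\vu_n$, and $\overline{m_j}=0$ means exactly $\vu_j\in 3L$. This \emph{does} occur for natural periodic configurations: with $\vu_0=1$, $\vu_1=\frac13+\frac{2\sqrt2}{3}i$, $\vu_2=\frac13-\frac{2\sqrt2}{3}i$ (unit vectors, $\Q$-dependent since $3\vu_1+3\vu_2=2\vu_0$, hence periodic by Lemma~\ref{lemmaper}) one has $\vu_0-\vu_1-\vu_2=(-\tfrac13,0)\in L$, so $\vu_0\in 3L$ and your third--division points say nothing about the strip in direction $\vu_0$. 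Your proposed remedy --- ``such a configuration cannot cover the plane, in fact not even for $\e<\frac12$'' --- is an unproved assertion, and precisely these configurations (vectors in $\Q(\sqrt{-2})$ of the kind used in the paper's Section~\ref{sec:constr}) are the ones a complete proof must handle; note that the paper disposes of them by a separate parity argument modulo the prime $2$ dividing $D$, not by anything resembling ``thinness'' of the lattice. Until the case $\vu_j\in 3L$ is treated, the proof is incomplete. (The ``$3$-adic bookkeeping'' in your third paragraph, by contrast, is only sketched but looks routine.)
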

\begin{proof}
If $n=0$ or $1$ then it is trivial that covering is not possible
for  any $\e<\frac{1}{2}$.

\medskip

Assume $n\ge 2$, and $\bigcup_{j=0}^nR_{\theta_j}E_{\e}$ is
periodic. Without loss of generality we can assume that
$\theta_0=0$. Then all the numbers $e^{i\theta_j}$ (for
$j=0,\dots, n$) belong to the same quadratic imaginary field, and
all the numbers  $\cos\theta_j$ are rational by Lemma
\ref{qextension}. Let the field be denoted by $\K=\Q (\sqrt{-D})$,
where $D$ is a positive square-free integer. Then $(\cos\theta_j,
\sin\theta_j)=(\frac{m_j}{n_j}, \frac{k_j\sqrt{D}}{n_j})$ for some
integers $m_j, k_j, n_j$ such that $m_j^2+Dk_j^2=n_j^2$. We may assume
that $\textrm{gcd}(m_j, k_j, n_j)=1$; then the equation $m_j^2+Dk_j^2=n_j^2$ implies
that $m_j, k_j, n_j$ are pairwise co-prime. Let $M=\prod_{j=0}^{n}n_j$.

\medskip

If $D=1$ then the angles $\theta_j$ correspond to Pythagorean
triples, $m_j^2+k_j^2=n_j^2$. Note that all the $n_j$ must be odd.
Consider the point $\vx=(\frac{M}{2},\frac{M}{2})\in \R^2$. Then
$ \langle \vu_j, \vx \rangle =
\frac{M}{n_j}\left(\frac{m_j+k_j}{2}\right)$, where
$\frac{M}{n_j}$ is an odd integer and $m_j+k_j$ is odd. Therefore,
$\| \langle \vu_j, \vx \rangle \|=\frac{1}{2}$ for each $j$, and
hence $\vx$ is not covered if $\e<\frac{1}{2}$.

\medskip

If $D\ne 1$ then let $p$ denote the smallest prime dividing $D$.
If $p=2$ then all $m_j, n_j$ must be odd. Consider the point
$\vx=(\frac{M}{2},M\sqrt{2})\in \R^2$. Then $ \langle \vu_j, \vx
\rangle =\frac{M}{n_j}\frac{m_j}{2}+\frac{2Mk_j}{n_j}$.
Therefore $\| \langle \vu_j, \vx \rangle \|=\frac{1}{2}$ for each
$j$, and hence $\vx$ is not covered if $\e<\frac{1}{2}$.

\medskip

Finally, assume $p>2$.  Then $m_j^2+Dk_j^2=n_j^2$ implies that
$m_j \equiv \pm n_j$ (mod $p$), and all $m_j, n_j$ are relatively
prime to $p$. Let $t$ be an integer such that $tM\equiv 1$ mod
$p$. Consider the point $\vx=(\frac{tM(p-1)}{2p},M\sqrt{D})\in
\R^2$. Then $ \langle \vu_j, \vx \rangle
=\frac{tMm_j}{n_j}\frac{p-1}{2p}+ \frac{k_jMD}{n_j}$. Here
$\frac{tMm_j}{n_j}$ is an integer which is $\pm 1$ (mod $p$), and
$\frac{k_jMD}{n_j}$ is also an integer. Therefore, $\| \langle
\vu_j, \vx \rangle \|=\frac{p-1}{2p}\ge \frac{1}{3}$ for each $j$,
and hence $\vx$ is not covered if $\e<\frac{1}{3}$.
\end{proof}

\section{Covering with $\e =\frac{1}{3} -\frac{1}{48}$}\label{sec:constr}

Having all the negative results so far, one might be tempted to
conjecture that the trivial covering cannot be improved and
$\e_0=\frac{1}{3}$. However, we will now show that this is not the case.

\medskip

\begin{theorem}
Let $\e =\frac{1}{3} -\frac{1}{48}$. Define $\theta_1 =0$, and
$\theta_2=\frac{2\pi}{3}, \theta_3=\frac{4\pi}{3}$. Let $\theta_4$
be such that $(\cos\theta_4, \sin\theta_4)=(\frac{1}{3},
\frac{\sqrt{8}}{3})$, and $\theta_5=\theta_4+\frac{2\pi}{3},
\theta_6=\theta_4+\frac{4\pi}{3}$. Let $\theta_7$ be such that
$(\cos\theta_7, \sin\theta_7)=(\frac{1}{3}, -\frac{\sqrt{8}}{3})$,
and $\theta_8=\theta_7+\frac{2\pi}{3},
\theta_9=\theta_7+\frac{4\pi}{3}$. Then
$\bigcup_{j=1}^9R_{\theta_j}E_{\e}=\R^2$.
\end{theorem}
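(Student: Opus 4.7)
The plan is to verify the covering by analyzing the ``uncovered region'' of the three trivial strips $R_{\theta_1}E_\e, R_{\theta_2}E_\e, R_{\theta_3}E_\e$ and checking that the six additional strips fill in the gaps.

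First, I would identify the uncovered region of the three trivial strips. Since $\vu_1+\vu_2+\vu_3=0$, the three fractional parts $\{\vu_j\cdot\vx\}$ sum to an integer; for $\vx$ to be uncovered by the first three strips each fractional part must lie in $(\e,1-\e)=(5/16,11/16)$, forcing them to cluster around either $1/3$ (sum $=1$) or $2/3$ (sum $=2$). By the symmetry $\vx\mapsto-\vx$, it suffices to treat the first case. The uncovered region is then a disjoint union of small hexagons of ``radius'' $1/3-\e=1/48$, centered at \emph{deep holes} $\vx_0(a,b)=(a+\tfrac13,\,b/\sqrt3)$ indexed by $a,b\in\ZZ$ with $a+b$ odd. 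Since each hexagon has diameter $O(1/48)$ and each $\vu_j$ is a unit vector, a strip $R_{\theta_j}E_\e$ covers an entire hexagon around $\vx_0(a,b)$ as soon as $\|\vu_j\cdot\vx_0(a,b)\|\le\e-\delta$, where a direct computation gives $\delta=\sqrt6/108$ as an upper bound for the oscillation of $\vu_j\cdot\vh$ over the hexagon. The task thus reduces to showing, for every $(a,b)$ with $a+b$ odd, that some $j\in\{4,\dots,9\}$ satisfies this inequality.

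Next I would write the six inner products as $\vu_j\cdot\vx_0=(P_j+Q_j\sqrt6)/18$ with integer-valued linear expressions $P_j,Q_j$ in $(a,b)$; explicit formulas follow from $(\cos\theta_4,\sin\theta_4)=(1/3,2\sqrt2/3)$ and the $2\pi/3$-rotations. A key algebraic tool is the identity $\vu_j+\vu_{j+3}=\tfrac23\vu_{j-3}$ for $j=4,5,6$, which forces the pair-sum $(\vu_j+\vu_{j+3})\cdot\vx_0$ to equal $\tfrac23$ times an element of $\ZZ+\tfrac13$, hence to take only the values $2/9,\,5/9,\,8/9$ modulo $1$. If this pair-sum is congruent to $5/9\pmod1$, then the pair $\{\vu_j,\vu_{j+3}\}$ cannot both leave $\vx_0$ uncovered, because the sum of two values in $(5/16,11/16)\pmod1$ lies in $(0,3/8)\cup(5/8,1)\pmod1$ and excludes $5/9$. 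For each of the three pair-sums this yields a sufficient condition on the appropriate linear combination of $a,b$ modulo $3$, together covering the majority of residue classes.

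The main obstacle will be the residual residue classes of $(a,b)\pmod3$ (a short finite list compatible with $a+b$ odd) for which none of the three pair-sums reduces to $5/9\pmod1$. Here one must exploit the diophantine behaviour of $\sqrt6$: using the convergent $5/2$ (with error $5/2-\sqrt6=1/(10+4\sqrt6)$), one replaces each $N_j=P_j+Q_j\sqrt6$ by its rational surrogate $\widetilde N_j=P_j+5Q_j/2$ and tabulates $\widetilde N_j\bmod18$ case-by-case, refining $(a,b)$ to a higher modulus if needed, while controlling the discrepancy $|N_j-\widetilde N_j|=|Q_j|/(10+4\sqrt6)$ against the safety margin $18(\e-\delta)=45/8-\sqrt6/6$. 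The delicate step is verifying that in each residual sub-case at least one $\widetilde N_j$ is close enough to $18\ZZ$ that the bound survives the substitution error; once this finite enumeration is complete, every deep-hole hexagon is covered by at least one of $R_{\theta_4}E_\e,\dots,R_{\theta_9}E_\e$, finishing the proof.
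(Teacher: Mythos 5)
Your strategy --- reduce to the deep holes of the triple $\{\vu_1,\vu_2,\vu_3\}$ and then run a diophantine case analysis on the six remaining directions --- is genuinely different from the paper's, and it breaks down exactly at the step you call delicate. The residual residue classes are really there: writing $\vu_2\cdot\vx_0=m_2+\frac13$, $\vu_3\cdot\vx_0=m_3+\frac13$, the class $(a,m_2,m_3)\equiv(0,1,1)\pmod 3$ (realized e.g.\ by $a=0$, $b=3$) makes all three pair-sums equal to $2/9$ or $8/9$, so none of them is excluded by your interval computation. For such a hole you would need some individual $\vu_j\cdot\vx_0$, $j\in\{4,\dots,9\}$, to lie within $\e-\delta$ of $\ZZ$. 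But $\vu_4\cdot\vx_0=\frac{3a+1}{9}+\frac{2b\sqrt6}{9}$, and since $\sqrt6$ is irrational this quantity is equidistributed mod $1$ as $b$ runs over any fixed residue class; no congruence condition on $(a,b)$, to any modulus, can keep it away from $\frac12$. Concretely, your substitution $\sqrt6\mapsto\frac52$ incurs the error $|Q_j|/(10+4\sqrt6)$, which grows linearly in $|b|$ and is therefore unbounded on every residue class, so no finite enumeration can absorb it. (A secondary problem: the uncovered set around a deep hole is a triangle reaching distance $\frac1{24}$ from the hole, not $\frac1{48}$, and $\max|\vu_4\cdot\vh|$ over it is $(2\sqrt6+1)/144\approx0.041$, well above your claimed $\delta=\sqrt6/108$; with the correct margin even your $5/9$ pair argument as stated becomes too tight.)

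The missing idea is to apply the refinement you used on the first triple to the other two triples as well: since $\vu_4+\vu_5+\vu_6=0$ and $\vu_7+\vu_8+\vu_9=0$, the fractional parts $w_4,\dots,w_9$ of an uncovered point are forced into the same set $I_1\cup I_2=\bigl(\frac13-\frac1{48},\frac13+\frac1{24}\bigr)\cup\bigl(\frac23-\frac1{24},\frac23+\frac1{48}\bigr)$ as $w_1,w_2,w_3$. Once this is done, your identity $3(\vu_4+\vu_7)=2\vu_1$ finishes the proof with no residual cases and no diophantine approximation: for $w_4,w_7\in I_1\cup I_2$ one checks directly that $\|3(w_4+w_7)\|<\frac14$, while $w_1\in I_1\cup I_2$ gives $\|2w_1\|>\frac14$, contradicting $3(w_4+w_7)-2w_1\equiv0\pmod 1$. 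This is precisely the paper's proof; it applies to an arbitrary point $\vx$, so the reduction to deep holes and the oscillation margin $\delta$ are not needed at all.
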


\medskip

\begin{proof}

Let $\vx\in \R^2$ be arbitrary, and let $\langle \vu_j, \vx
\rangle=s_j$ for $1\le j\le  9$. Observe the following relations:

\begin{equation*}
s_1+s_2+s_3=0, \ \ s_4+s_5+s_6=0, \ \ s_7+s_8+s_9=0, \ \
3(s_4+s_7)-2s_1=0.
\end{equation*}

\medskip

Let $\ma$ denote the $4\times 9$ matrix corresponding to this set
of linear equations. Assume, by contradiction, that $\|s_j\|
> \e$ for all $j$. Then the fractional parts  $w_j=\{s_j\}$ must lie in
the interval $I_{\e}=(\frac{1}{3} -\frac{1}{48}, \frac{2}{3}
+\frac{1}{48})$. Therefore the vector $\vw=(w_1, \dots, w_9)$ is
contained in the cube $I_{\e}^9$, and the image of $\vw$ under the
linear transformation $\ma$ must be an integer lattice point in
$\R^4$. We will show that this is not possible.

\medskip

We claim that all the $w_j$ must fall into $I_1\cup I_2$, where
$I_1=(\frac{1}{3} -\frac{1}{48}, \frac{1}{3}+\frac{1}{24})$ and
$I_2=(\frac{2}{3}-\frac{1}{24}, \frac{2}{3}+\frac{1}{48})$.
Indeed, $w_1\equiv -w_2-w_3$ (mod 1), and $-w_2-w_3\in -I_\e
-I_\e=(-\frac{4}{3}-\frac{1}{24}, -\frac{2}{3}+\frac{1}{24}
)\equiv [0, \frac{1}{3}+\frac{1}{24})\cup
(\frac{2}{3}-\frac{1}{24}, 1)$ (mod 1), and hence $w_1$ must fall
into the intersection of this set with $I_\e$ which is exactly
$I_1\cup I_2$. The same reasoning works for all $w_j$.

\medskip

Finally, if all the $w_j$ fall into $I_1\cup I_2$, then the
equation $3(w_4+w_7)-2w_1\equiv 0$ (mod 1) cannot be satisfied.
The reason for this is that $\|3(w_4+w_7)\|<\frac{1}{4}$ (because
$w_4, w_7\in I_1\cup I_2$), while $\|-2w_1\|>\frac{1}{4}$ (because
$w_1\in I_1\cup I_2$).
\end{proof}

This construction can be improved to decrease the value of $\e$.
However,  we do not see any argument to show that $\e$ can be
arbitrarily close to zero.

\section{A compactness argument for $\e =\frac{1}{5}$}\label{sec:compact}

We now turn to a non-constructive compactness argument which
allows us to decrease the value of $\e$.

\medskip

\begin{lemma}\label{compact}
Let $\TT$ denote the group $[-\frac{1}{2},\frac{1}{2})$ with the
addition operation mod 1. If $\e$ does not have the finite
rotation property then there exists a non-continuous additive
homomorphism $\gamma : \R^2 \to \TT$ such that $|\gamma(\vu)|\ge
\e$ for all unit vectors $\vu$. Conversely, if $\e$ has the finite
rotation property then there exists no additive homomorphism
$\gamma : \R^2 \to \TT$ such that $|\gamma(\vu)|> \e$ for all unit
vectors $\vu$.
\end{lemma}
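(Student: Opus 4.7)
The plan is to prove both directions via a duality between vectors $\vx\in\R^2$, interpreted as continuous characters $\gamma_\vx:\vy\mapsto\langle\vx,\vy\rangle\pmod 1$, and arbitrary additive homomorphisms $\gamma:\R^2\to\TT$.

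For the forward direction, I would apply Tychonoff to the compact product space $\TT^{\R^2}$. Inside it, the set $H$ of additive homomorphisms is closed (each equation $\gamma(\vx+\vy)=\gamma(\vx)+\gamma(\vy)$ is a closed condition), and for each unit vector $\vu$ the set $U_\vu:=\{\gamma:|\gamma(\vu)|\ge\e\}$ is closed. I reduce the existence of a point in $H\cap\bigcap_\vu U_\vu$ to the finite intersection property: given a finite collection $F$ of unit vectors, the failure of the finite rotation property supplies some $\vx\in\R^2$ with $\|\langle\vu,\vx\rangle\|>\e$ for every $\vu\in F$, so the continuous character $\gamma_\vx$ lies in $H\cap\bigcap_{\vu\in F}U_\vu$. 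Compactness then yields $\gamma\in H$ with $|\gamma(\vu)|\ge\e$ for every unit $\vu$. Such $\gamma$ must be discontinuous, since a continuous additive homomorphism $\R^2\to\TT$ is necessarily of the form $\gamma_{\vx_0}$ for some $\vx_0\in\R^2$, and the map $\vu\mapsto\langle\vx_0,\vu\rangle$ on the unit circle attains the value $0$ by continuity.

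For the converse, suppose $\vu_0,\ldots,\vu_n$ yield a covering at half-width $\e$, and assume for contradiction that some additive homomorphism $\gamma$ satisfies $|\gamma(\vu_j)|>\e$ for every $j$. I would consider the continuous homomorphism $\Gamma:\R^2\to\TT^{n+1}$, $\Gamma(\vx)=(\langle\vu_j,\vx\rangle)_j\pmod 1$. The covering hypothesis places $\Gamma(\R^2)$, and hence its closure, inside the closed set $C_\e=\{(y_0,\ldots,y_n)\in\TT^{n+1}:\,|y_j|\le\e\text{ for some }j\}$. It therefore suffices to establish $(\gamma(\vu_j))_j\in\overline{\Gamma(\R^2)}$, which is the main technical point: by duality for the torus, the closed subgroup $\overline{\Gamma(\R^2)}\subseteq\TT^{n+1}$ is cut out by its annihilator in the character group $\ZZ^{n+1}$, and since $\sum_j k_j\langle\vu_j,\vx\rangle=\langle\sum_j k_j\vu_j,\vx\rangle$ vanishes identically iff $\sum_j k_j\vu_j=0$, this annihilator consists precisely of the integer tuples $(k_j)$ with $\sum_j k_j\vu_j=0$. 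Any group homomorphism $\gamma$ preserves such integer relations, so $(\gamma(\vu_j))_j$ does lie in $\overline{\Gamma(\R^2)}\subseteq C_\e$, contradicting $|\gamma(\vu_j)|>\e$ for all $j$.
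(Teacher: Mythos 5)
Your proposal is correct and follows essentially the same route as the paper: the forward direction is the same compactness/finite-intersection-property argument (the paper phrases it via the compact dual of $\R^2$ with the discrete topology, you via Tychonoff on $\TT^{\R^2}$, which amounts to the same thing), and the converse is the same observation that $(\gamma(\vu_j))_j$ must lie in the closed subgroup of $\TT^n$ cut out by the integer relations among the $\vu_j$, which by the covering hypothesis misses $(\e,1-\e)^n$. You even make explicit two points the paper leaves implicit, namely the duality argument identifying $\overline{\Gamma(\R^2)}$ with the annihilator of $M$, and the reason the limiting $\gamma$ cannot be continuous.
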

\begin{proof}
Let $\Gamma$ denote the dual group of $\R^2$ ($\R^2$ is meant here
as an additive group with the Euclidean topology). Then $\Gamma$
can be identified with $\R^2$ in the usual way, $\vx
\leftrightarrow \gamma_\vx$ where $\gamma_\vx : \R^2 \to \TT$ is
the character $\gamma_\vx (\vu):=\langle \vx,\vu \rangle$ (mod 1).
Now, consider $\R^2$ as an additive group with the discrete
topology. Then its dual group, denoted by $\Gamma'$, is compact
and consists of all possible additive homomorphisms from $\R^2 \to
\TT$.

\medskip

Let $\Co_1$ denote the unit circle in the plane $\R^2$. The
assumption that $\e$ does not have the finite rotation property
means that for any $\vu_1, \dots, \vu_N\in \Co_1$ there exists an
$\vx\in \R^2$ such that $\|\langle \vu_j, \vx\rangle \|\ge \e$ for
each $\vu_j$. In other words, there exists a $\gamma_\vx \in
\Gamma \subset \Gamma'$ such that $| \gamma_\vx (\vu_j) | \ge \e$
for each $\vu_j$. Now, due to the compactness of $\Gamma'$ we
claim that there must exist $\gamma \in \Gamma'$ such that $|
\gamma | \ge \e$ on the whole of $\Co_1$. Indeed, this is the
so-called finite intersection property of compact sets: if
$F_{\vu}$ denotes the set of characters $\gamma\in \Gamma'$ such
that $| \gamma(\vu ) |\ge \e$ then our condition says that any
finite intersection of such sets $F_{\vu}$ is non-empty. Note that
$F_{\vu}$ are closed sets, and therefore the intersection of all
of the sets $F_{\vu}$ is non-empty by compactness.

\medskip

We now prove the converse statement. If $\e$ has the finite
rotation property then there exist unit vectors $\vu_1, \dots,
\vu_n$ such that for every $\vx\in \R^2$ we have $\|\langle \vu_j,
\vx \rangle \| \le \e$ for some $j$. Let $M\subset \ZZ^n$ describe
the rational linear relations among the vectors: $M=\{(m_1, \dots,
m_n)\in \ZZ^n : \sum_j m_j\vu_j=0\}$. Let $g: \R^2 \to \TT^n$ be
the function defined by $g(\vx)=(\langle \vu_1, \vx \rangle,
\dots, \langle \vu_n, \vx \rangle)$, and let
$S=\overline{\ran(g)}\subset \TT^n$ denote the closure of the
range of $g$. Then $S$ is a closed subgroup, and $S\cap (\e,
1-\e)^n=\emptyset$. The subgroup $S$ is characterized by the
linear relations in $M$, namely $S=\{(x_1, \dots , x_n)\in \TT^n :
\sum_j m_jx_j=0 \ \textrm{for all} \ (m_1, \dots, m_n)\in M\}$.

\medskip

Consider any additive homomorphism $\gamma : \R^2 \to \TT$. For
every $\vx\in \R^2$ we have $(\gamma(\vu_1), \dots ,
\gamma(\vu_n))\in S$ because $\sum_j
m_j\gamma(\vu_j)=\gamma(\sum_j m_j\vu_j)=0$. As $S\cap (\e,
1-\e)^n=\emptyset$ we conclude that $|\gamma(\vu_j)|\le \e$ for
some $j$.
\end{proof}

In the following auxiliary result it will be convenient to
identify $\R^2$ with $\Co$.

\begin{lemma}\label{prop1}
Let $\vu_1, \dots \vu_n\in \Co$ be unit vectors, and let $M\subset
\ZZ^n$ describe the rational linear relations among them:
$M=\{(m_1, \dots, m_n)\in \ZZ^n : \sum_j m_j\vu_j=0\}$. Let
$S\subset \TT^n$ be the subgroup $S=\{(x_1, \dots , x_n)\in \TT^n
: \sum_j m_jx_j=0 \ \textrm{for all} \ (m_1, \dots, m_n)\in M\}$.
Let $\gamma : \Co \to \TT$ be a non-continuous additive
homomorphism, and let $g: \Co \to \TT^n$ be the function defined
by $g(z)=(\gamma(\vu_1 z), \dots  , \gamma(\vu_n z))$. Let $U$ be
any neighbourhood of zero. Then $g(U)$ is dense in $S$.
\end{lemma}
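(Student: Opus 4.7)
The plan is to combine Pontryagin duality for the compact abelian group $S$ with a density result for non-continuous additive homomorphisms on Euclidean neighborhoods. Since $S\subset\TT^n$ is cut out by the relations $M$, the continuous characters of $S$ are parametrized by $\ZZ^n/M$; a representative $(\chi_1,\dots,\chi_n)\in\ZZ^n$ gives a nontrivial character $\chi$ of $S$ precisely when $(\chi_j)\notin M$. For such a nontrivial $\chi$ one computes
\[
\chi(g(z)) \;=\; \sum_{j=1}^n \chi_j\,\gamma(\vu_j z) \;=\; \gamma(\vw z), \qquad \vw := \sum_{j=1}^n \chi_j \vu_j.
\]
The hypothesis $(\chi_j)\notin M$ forces $\vw\neq 0$, and since $\gamma$ is non-continuous and multiplication by $\vw$ is a Euclidean isomorphism of $\Co$, the map $z\mapsto\gamma(\vw z)$ is also non-continuous. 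I will deduce density of $g(U)$ in $S$ from density of every $\chi\circ g$ on Euclidean neighborhoods of $0$; the latter reduces to the following \emph{key claim}: every non-continuous additive homomorphism $\delta:\Co\to\TT$ satisfies that $\delta(V)$ is dense in $\TT$ for every Euclidean neighborhood $V$ of $0$.

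To prove the key claim, I would first locate a line $\R\vu\subset\Co$ on which $\delta$ is already non-continuous. Such a line exists, for if $\delta(t\vu)=\alpha(\vu)\,t\pmod 1$ for every $\vu\in\Co$ and some $\alpha(\vu)\in\R$, the slope function $\alpha:\Co\to\R$ is readily checked to be $\R$-linear, forcing $\delta$ to be $\R$-linear and hence continuous on $\Co$. Restricting to such a line $\R\vu$, it then suffices to show that any non-continuous additive homomorphism $\delta':\R\to\TT$ has graph dense in $\R\times\TT$. For this I would lift $\delta'$ to a $\Q$-linear $\tilde\delta:\R\to\R$ using a Hamel basis of $\R$ over $\Q$; the lift is not $\R$-linear, so its graph is a $\Q$-subspace of $\R^2$ not contained in any line through the origin, hence it contains two $\R$-linearly independent vectors, and its $\Q$-span is dense in $\R^2$. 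Reducing modulo $\ZZ$ in the second coordinate yields density of the graph of $\delta'$ in $\R\times\TT$; applying this with $V\cap\R\vu$ as the base interval gives $\delta(V)\supset\delta(V\cap\R\vu)$ dense in $\TT$.

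With the key claim in hand, I would upgrade ``$\chi\circ g(U)$ is dense in $\TT$ for every nontrivial $\chi$'' to ``$g(U)$ is dense in $S$'' by introducing
\[
W \;:=\; \bigcap_{V}\overline{g(V)} \;\subset\; S,
\]
where $V$ ranges over all Euclidean neighborhoods of $0$ in $\Co$. A routine argument using symmetric $V_0$ with $V_0+V_0\subset V$ shows that $W$ is a closed subgroup of $S$. Now fix any nontrivial character $\chi$ of $S$ and any $t\in\TT$; the key claim applied to $\delta=\chi\circ g$ produces $u_k\in B(0,1/k)$ with $\chi(g(u_k))\to t$. By compactness of $S$ we may pass to a subsequence for which $g(u_k)\to a\in S$; then $u_k\to 0$ forces $a\in W$, while continuity of $\chi$ gives $\chi(a)=t$. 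Hence $\chi|_W$ surjects onto $\TT$, and in particular every nontrivial character of $S$ is nontrivial on $W$. By Pontryagin duality this yields $W=S$, and since $W\subset\overline{g(U)}\subset S$ for every $U$, we conclude $\overline{g(U)}=S$.

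The main obstacle, as I see it, is the key claim itself --- producing density of $\delta$ on \emph{arbitrarily small} Euclidean neighborhoods from the mere failure of continuity --- and the trick there is the reduction to a one-dimensional restriction followed by the Hamel-basis lifting to $\R$. The rest of the argument, isolating $W$ and invoking Pontryagin duality, is the natural way to package an ``every nontrivial character sees density'' statement into genuine topological density in $S$.
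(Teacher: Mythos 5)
Your overall architecture is sound and very close to the paper's: you form $W=\bigcap_V\overline{g(V)}$, show it is a closed subgroup of $S$, and use a character $\chi=(\chi_1,\dots,\chi_n)\notin M$ together with the identity $\chi(g(z))=\gamma(\vw z)$, $\vw=\sum_j\chi_j\vu_j\neq 0$, to force $W=S$ by duality. The genuine gap is in your proof of the key claim, at the Hamel-basis lifting step. A non-continuous additive homomorphism $\delta':\R\to\TT$ need \emph{not} lift to a $\Q$-linear map $\tilde\delta:\R\to\R$ (the obstruction lives in $\mathrm{Ext}^1(\R,\ZZ)\neq 0$). Concretely, fix a Hamel basis of $\R$ over $\Q$ containing $1$, let $\pi:\R\to\Q$ be the coordinate of $1$, and set $\delta'(x)=\pi(x)\bmod 1$. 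This is additive and non-continuous (its image is $\Q/\ZZ$, neither $\{0\}$ nor all of $\TT$), yet it has no lift: a $\Q$-linear $\tilde\delta:\Q\to\R$ is $q\mapsto\alpha q$, and $(\alpha-1)q\in\ZZ$ for all $q\in\Q$ is impossible. Worse, the recipe ``choose real representatives of $\delta'(e_i)$ and extend $\Q$-linearly'' produces $\tilde\delta\equiv 0$ here (all $\delta'(e_i)=0$), which is $\R$-linear, so your conclusion ``the lift is not $\R$-linear, hence its graph is not contained in a line'' breaks down. The root cause is that $\delta'(q e_i)$ is only determined up to torsion by $q$ and $\delta'(e_i)$, so the constructed $\tilde\delta$ agrees with $\delta'$ only modulo $\Q/\ZZ$-valued errors.

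The key claim itself is true, and your reduction to a single line is fine; a correct proof runs, e.g., as follows: put $H=\bigcap_I\overline{\delta'(I)}$, a closed subgroup of $\TT$; if $H\neq\TT$ then $H$ is finite of some order $m$, whence $m\delta'$ is continuous at $0$ and therefore of the form $t\mapsto\alpha t$, and then $t\mapsto\delta'(t)-\tfrac{\alpha}{m}t$ is a homomorphism from the divisible group $\R$ into the finite group $\tfrac1m\ZZ/\ZZ$, hence identically zero, making $\delta'$ continuous --- a contradiction. The paper sidesteps the key claim entirely by arguing in the contrapositive: if $\chi$ vanishes on $W$, then any sequence $z_m\to 0$ with $\chi(g(z_m))\to w$ has (after passing to a subsequence) $g(z_m)\to\vy\in W$ and $w=\chi(\vy)=0$, so $h(z)=\gamma(\vw z)$ is continuous at $0$, hence continuous, hence $\gamma$ is continuous --- contradiction. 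That route needs only compactness, not any density statement for non-continuous homomorphisms, and is the cleaner way to close your argument.
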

\begin{proof}
We use the standard notation $\overline{A}$ for the closure of a
set $A$. We will show that as $U$ runs through the neighbourhoods
of zero, we have $B:=\cap_{U} \ \overline{g(U)}=S$ (this is
clearly equivalent to the statement of the proposition). First
notice that $B$ is a non-empty ($0\in B$) and compact set. It is
also clear that $B\subset S$, as $g(z)\in S$ for every $z\in \Co$.
We claim that $B$ is a subgroup of $S$. To see this, let
$b_1,b_2\in B$, and let $U$ be any neighbourhood of zero. We want
to show that $b_1-b_2\in \overline{g(U)}$. Let $U'$ be a smaller
neighbourhood, such that $U'-U'\subset U$. Then $b_1-b_2\in
\overline{g(U')}-\overline{g(U')}=\overline{g(U')-g(U')}=\overline{g(U'-U')}\subset
\overline{g(U)}$. Therefore $B$ is a subgroup.

\medskip

If $B=S$ then we are done. If $B$ is a compact non-empty proper
subgroup of $S$, then there exists a continuous character $\chi:
\TT^n \to \TT$ such that $\chi$ is not identically zero on $S$,
but $\chi|_B\equiv 0$. Such a character $\chi$ can be identified
with an $n$-tuple of integers, $\chi=(a_1,\dots a_n)\in \ZZ^n$,
$\chi \notin M$, so that $\chi(t_1, \dots t_n)=a_1t_1+\dots
a_nt_n$ for any $(t_1, \dots t_n)\in \TT^n$. Let $\beta =\sum_j
a_j\vu_j$, which is non-zero because $(a_1,\dots a_n)\notin M$.
Consider the additive homomorphism $h: \Co \to \TT$ defined by
$h(z)=\gamma(\beta z)=\chi(g(z))$.  We claim that $h(z)$ is
continuous. Assume it is not. Then there exists a sequence $z_m\to
0$ such that $h(z_m)$ does not converge to 0. By passing to a
subsequence, we may assume that $h(z_m) \to w\ne 0$. Again, by
passing to a subsequence we may assume that $g(z_m)$ converges to
some $\vy\in \TT^n$. But then $\vy\in B$ and $\chi (\vy)=\lim \chi
(g(z_m))=\lim h(z_m)=w\ne0$, a contradiction. Therefore, $h(z)$ is
continuous, and so is $\gamma(z)=h(\frac{1}{\beta}z)$, which
contradicts our assumption on $\gamma$.
\end{proof}

With the help of Lemma \ref{compact} and \ref{prop1} we can prove
the main result of this section: even if
$\bigcup_{j=0}^nR_{\theta_j}E_{\e}$ does not cover $\R^2$ but the
non-covered region is small enough in some sense, we can still
conclude that $\e_0\le \e$.

\begin{theorem}\label{thm:main}
Assume $\e>0$ and unit vectors $\vu_1, \dots, \vu_n\in \Co$ are
given. Let $M\subset \ZZ^n$ describe the rational linear relations
among the vectors: $M=\{(m_1, \dots, m_n)\in \ZZ^n : \sum_j
m_j\vu_j=0\}$. Let $S\subset \TT^n$ be the subgroup $S=\{(x_1,
\dots , x_n)\in \TT^n : \sum_j m_jx_j=0 \ \textrm{for all} \ (m_1,
\dots, m_n)\in M\}$, and let $X_\e =S\cap (\e, 1-\e)^n$. If the
difference set $X_\e-X_\e$ is not dense in $S$ then $\e_0\le \e$,
with the notation of Definition \ref{def1}.
\end{theorem}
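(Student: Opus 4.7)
The argument is by contrapositive: suppose $\e_0 > \e$, and we show $X_\e - X_\e$ is dense in $S$. Since having the finite rotation property is monotone in $\e$ (enlarging the strips preserves any cover), any $\e' \in (\e, \e_0)$ fails to have it, so Lemma~\ref{compact} produces a non-continuous additive homomorphism $\gamma : \R^2 \to \TT$ with $|\gamma(\vu)| \ge \e'$ for every unit vector $\vu$.

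Mimicking the setup of Lemma~\ref{prop1}, define $g : \Co \to \TT^n$ by $g(z) = (\gamma(\vu_1 z), \dots, \gamma(\vu_n z))$. Three properties are immediate: (i) $g$ is additive; (ii) $g(z) \in S$ for every $z$, since whenever $(m_1, \dots, m_n) \in M$ one has $\sum_j m_j \gamma(\vu_j z) = \gamma\bigl(z \sum_j m_j \vu_j\bigr) = 0$; and (iii) for $|z| = 1$, each $\vu_j z$ is again a unit vector, so $|\gamma(\vu_j z)| \ge \e' > \e$, which places $g(z)$ strictly inside the open box and hence in $X_\e$.

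Next invoke Lemma~\ref{prop1} with the neighborhood $U = \{w \in \Co : |w| < 2\}$, which tells us $g(U)$ is dense in $S$. Combine this with the elementary geometric fact that every $w \in \Co$ with $|w| < 2$ can be written as $w = z_1 - z_0$ with $|z_0| = |z_1| = 1$: writing $z_0 = e^{i\alpha}$, the condition $|z_0 + w| = 1$ reduces to $\cos(\arg w - \alpha) = -|w|/2$, which is solvable precisely when $|w| \le 2$. Given any $s \in S$ and $\delta > 0$, pick $w \in U$ with $g(w)$ within $\delta$ of $s$, decompose $w = z_1 - z_0$ as above, and use additivity: $g(w) = g(z_1) - g(z_0) \in X_\e - X_\e$ by (iii). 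Hence every $s \in S$ is arbitrarily well approximated by elements of $X_\e - X_\e$, i.e.\ $X_\e - X_\e$ is dense in $S$, completing the contrapositive.

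The only real subtlety is the gap between $|\gamma(\vu)| \ge \e$ (which Lemma~\ref{compact} directly delivers at $\e$ itself) and $|\gamma(\vu)| > \e$ (which is what one needs to land $g(z)$ in the \emph{open} set $X_\e$ rather than merely in the closed box $[\e,1-\e]^n$, whose intersection with $S$ need not lie in $\overline{X_\e}$ if $S$ happens to sit on its boundary). Passing to $\e' \in (\e, \e_0)$ before invoking Lemma~\ref{compact} bypasses this cleanly, and is the one place in the argument where the infimum definition of $\e_0$ is used essentially.
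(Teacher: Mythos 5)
Your proof is correct and follows essentially the same route as the paper's: pass to $\e'>\e$, obtain a non-continuous $\gamma$ from Lemma~\ref{compact}, note $g(\Co_1)\subset X_\e$, and combine Lemma~\ref{prop1} with the fact that the disk of radius $2$ equals $\Co_1-\Co_1$ (which the paper asserts as $g(2\DD)=g(\Co_1)-g(\Co_1)$ and you verify explicitly). The contrapositive framing and the remark about why the strict inequality forces the detour through $\e'$ are just more careful renderings of the same argument.
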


\begin{proof}
Let $\e'>\e$ arbitrary, and assume by contradiction that $\e'$
does not have the finite rotation property. Then there exists a
non-continuous additive homomorphism $\gamma:\R^2\to \TT^n$ such
that $|\gamma(\vu)|\ge \e'$ for all unit vectors $\vu$, by Lemma
\ref{compact}. Let $g: \Co \to \TT^n$ be the function defined by
$g(z)=(\gamma(\vu_1 z), \dots  , \gamma(\vu_n z))$. Then $g(z)\in
S\cap [\e', 1-\e']^n\subset X_\e$ for all unit vectors $z$.
However, if $\DD$ denotes the closed unit disk then
$g(2\DD)=g(\Co_1)-g(\Co_1)\subset X_\e-X_\e$ should be dense in
$\TT^n$ by Lemma \ref{prop1}, a contradiction.
\end{proof}

Improved upper bounds on $\e_0$ follow immediately.

\begin{corollary}\label{onefourth}
With the notation of Definition \ref{def1} we have $\e_0 \le
\frac{1}{4}$.
\end{corollary}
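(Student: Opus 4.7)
The plan is to apply Theorem~\ref{thm:main} with the three cube roots of unity $\vu_1=1$, $\vu_2=e^{2\pi i/3}$, $\vu_3=e^{4\pi i/3}\in\Co$. Since any two of these vectors are $\Q$-linearly independent, the only rational linear relation is $\vu_1+\vu_2+\vu_3=0$, so $M=\ZZ(1,1,1)$ and
\[
S=\{(x_1,x_2,x_3)\in\TT^3:x_1+x_2+x_3\equiv 0\pmod 1\}.
\]
The projection $(x_1,x_2,x_3)\mapsto(x_1,x_2)$ identifies $S$ with $\TT^2$ as a compact group and transports Haar measure to Lebesgue measure; in particular $|S|=1$. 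My goal is then to verify that the Lebesgue measure of $X_{1/4}-X_{1/4}$ is strictly less than $1$. Since this set is a finite union of open polygons, its closure also has measure $<1$, so it cannot be dense in $S$, and Theorem~\ref{thm:main} will immediately deliver $\e_0\le\tfrac{1}{4}$.

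To describe $X_{1/4}$ explicitly I would pass to centered coordinates $y_j:=x_j-\tfrac{1}{2}$. The cube condition $x_j\in(\tfrac{1}{4},\tfrac{3}{4})$ becomes $|y_j|<\tfrac{1}{4}$, and the linear relation becomes $y_1+y_2+y_3\in\tfrac{1}{2}+\ZZ$. Because $|y_1+y_2+y_3|<\tfrac{3}{4}$, only $y_1+y_2+y_3=\pm\tfrac{1}{2}$ is possible, yielding $X_{1/4}=A_+\cup A_-$ with $A_-=-A_+$. Projected to the $(y_1,y_2)$-plane, $A_+$ is the open triangle $\{(y_1,y_2):0<y_1,y_2<\tfrac{1}{4},\ y_1+y_2>\tfrac{1}{4}\}$ of area $\tfrac{1}{32}$, and $A_-$ is its reflection through the origin.

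Using $A_-=-A_+$ one then expands
\[
X_{1/4}-X_{1/4}=(A_+-A_+)\cup(A_++A_+)\cup\bigl(-(A_++A_+)\bigr).
\]
By convexity of the triangle $A_+$, the difference body $A_+-A_+$ is a centrally symmetric hexagon of area $6\cdot\tfrac{1}{32}=\tfrac{3}{16}$ (by a direct shoelace computation), while each $\pm(A_++A_+)=\pm 2A_+$ is a triangle of area $4\cdot\tfrac{1}{32}=\tfrac{1}{8}$. The step that requires a bit of care is the bookkeeping showing that these three pieces lie in pairwise disjoint regions of the fundamental square $[-\tfrac{1}{2},\tfrac{1}{2})^2$: the hexagon sits inside $\{|y_1+y_2|\le\tfrac{1}{4}\}$ around the origin, while $+2A_+$ sits inside $(0,\tfrac{1}{2})^2$ with $y_1+y_2>\tfrac{1}{2}$ and $-2A_+$ inside $(-\tfrac{1}{2},0)^2$ with $y_1+y_2<-\tfrac{1}{2}$. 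In particular no Minkowski sum wraps around the torus, and summing areas gives $|X_{1/4}-X_{1/4}|=\tfrac{7}{16}<1$. The remaining main potential obstacle, namely the no-wraparound check, is harmless here because $A_+$ is small enough that none of the relevant sums leaves its natural octant; once that is confirmed, Theorem~\ref{thm:main} closes the argument.
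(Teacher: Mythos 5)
Your proof is correct, but it takes a genuinely different (and heavier) route than the paper. The paper proves this corollary with a single vector: taking $\vu_1=1$ and $\e=\frac14+\delta$, the set $X_\e=(\e,1-\e)$ is just an interval, $X_\e-X_\e$ is an interval of length $1-4\delta<1$, hence not dense in $\TT$, and Theorem \ref{thm:main} gives $\e_0\le\frac14+\delta$ for every $\delta>0$. (The $\delta$ is genuinely needed there: at $\e=\frac14$ exactly with $n=1$ the difference set is $(-\frac12,\frac12)$, which \emph{is} dense in $\TT$.) You instead use the three cube roots of unity, which is precisely the configuration the paper reserves for the sharper Corollary \ref{onefifth}. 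Your computation checks out: $M=\ZZ(1,1,1)$ is indeed the full relation lattice, the projection of $X_{1/4}$ consists of the two triangles $\pm A_+$ each of area $\frac{1}{32}$, the decomposition of $X_{1/4}-X_{1/4}$ into the hexagon $A_+-A_+$ (area $\frac{3}{16}$) and the two triangles $\pm 2A_+$ (area $\frac18$ each) is valid since $A_+$ is convex and small enough that no Minkowski sum wraps around the torus, and the total measure $\frac{7}{16}<1$ rules out density of the closure. Since $\frac{7}{16}$ is comfortably below $1$, you can legitimately apply the theorem at $\e=\frac14$ exactly, with no $\delta$. The trade-off: the paper's one-vector argument is a one-liner, whereas your argument is essentially the engine behind Corollary \ref{onefifth} run at the wrong (too generous) value of $\e$ --- the same area bookkeeping with $\e=\frac15+\delta$ would already give the stronger bound $\e_0\le\frac15$.
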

\begin{proof}
Let $\delta>0$ be arbitrary and apply Theorem \ref{thm:main} with
$\e=\frac{1}{4}+\delta$ and $\vu_1=1$. Then $X_\e=(\e, 1-\e)$, and
$X_\e -X_\e$ is not dense in $\TT$. Therefore, $\e_0\le \e$, and
hence $\e_0\le \frac{1}{4}$.
\end{proof}

\medskip

A more elaborate argument gives the following improvement.

\begin{corollary}\label{onefifth}
With the notation of Definition \ref{def1} we have
$\e_0 \le \frac{1}{5}$.
\end{corollary}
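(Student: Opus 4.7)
The plan is to apply Theorem~\ref{thm:main} to $n=3$ unit vectors chosen as the cube roots of unity, $\vu_k = e^{2\pi i (k-1)/3}$ for $k = 1, 2, 3$. Since $\vu_1 + \vu_2 + \vu_3 = 0$ is the only rational linear relation (up to integer scalar), one has $M = \ZZ\cdot (1,1,1)$ and $S = \{(x_1, x_2, x_3) \in \TT^3 : x_1 + x_2 + x_3 \equiv 0 \pmod{1}\}$. I will show that for every $\e = \frac{1}{5} + \delta$ with $\delta>0$ sufficiently small, the set $X_\e - X_\e$ fails to be dense in $S$; combined with Theorem~\ref{thm:main} and letting $\delta \to 0^+$, this yields $\e_0 \le \frac{1}{5}$.

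The candidate missing point is $p = (\frac{1}{5}, \frac{1}{5}, \frac{3}{5}) \in S$. Represent elements of $X_\e$ by their lifts $a, b \in (\e, 1-\e)^3 \subset \R^3$ satisfying $\sum a_j, \sum b_j \in \ZZ$; for $\e$ near $\frac{1}{5}$ the only integer values available are $1$ and $2$, since $\sum a_j \in (3\e,\, 3 - 3\e) = (\frac{3}{5} + 3\delta,\, \frac{12}{5} - 3\delta)$. Suppose for contradiction that $a - b \equiv p \pmod{1}$. Each coordinate difference lies in $(-(1-2\e),\, 1-2\e) = (-\frac{3}{5} + 2\delta,\, \frac{3}{5} - 2\delta)$, so the congruences pin down the actual differences uniquely: $a_1 - b_1 = a_2 - b_2 = \frac{1}{5}$ and $a_3 - b_3 = -\frac{2}{5}$ (the alternative representative $\frac{3}{5}$ for the third entry falls outside the allowed range). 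The constraints $a_j, b_j \in (\e, 1-\e)$ then force $a_1, a_2 \in (\frac{2}{5} + \delta,\, \frac{4}{5} - \delta)$ and $a_3 \in (\frac{1}{5} + \delta,\, \frac{2}{5} - \delta)$, whence $\sum a_j$ lies in the open interval $(1 + 3\delta,\, 2 - 3\delta)$, which contains no integer. This contradicts $\sum a_j \in \ZZ$, so $p \notin X_\e - X_\e$.

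The estimate is robust: perturbing $p$ within $S$ by a small vector $\eta$ (with $\sum \eta_j \equiv 0 \pmod{1}$) only shifts the three coordinate bounds continuously, and the resulting interval for $\sum a_j$ still avoids both $1$ and $2$ as long as $\|\eta\|_\infty$ is small compared to $\delta$. Therefore an open neighborhood of $p$ in $S$ lies outside $X_\e - X_\e$, so $X_\e - X_\e$ is not dense in $S$, and Theorem~\ref{thm:main} completes the argument. The main obstacle is identifying the correct candidate point $p$: its coordinates are chosen so that every $a_j - b_j$ is pushed to the interior boundary of its admissible window, which is exactly what pinches $\sum a_j$ into a subinterval of $(1,2)$ of length $1 - 6\delta < 1$.
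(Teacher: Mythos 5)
Your proposal is correct and follows essentially the same route as the paper: both apply Theorem~\ref{thm:main} with $\e=\frac{1}{5}+\delta$ and the three cube roots of unity, so that $S$ is the subtorus $x_1+x_2+x_3\equiv 0$, and both conclude by showing $X_\e-X_\e$ is not dense in $S$. The only difference is that where the paper identifies $S$ with a $2$-torus and asserts the non-density of the difference set of two explicit triangles as ``easy to check,'' you supply an explicit witness $p=(\frac{1}{5},\frac{1}{5},\frac{3}{5})$ together with a complete and correct verification that a neighbourhood of $p$ in $S$ is missed.
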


\begin{proof}
Let $\delta>0$ arbitrary and apply Theorem \ref{thm:main} with
$\e=\frac{1}{5}+\delta$ and $\vu_1=1, \vu_2=e^{2\pi i/3},
\vu_3=e^{4\pi i/3}$. In this case $S\subset \TT^3$ is a
two-dimensional torus, which can be identified with $[0,1)^2$ via
the projection mapping $(s_1, s_2, s_3)\mapsto (s_1, s_2)$.
Elementary calculations give the exact position of the region
$X_\e$: it is the union of two triangles $T_1$ and $T_2$ with
coordinates of vertices $(\e, \e), (\e, 1-2\e), (1-2\e, \e)$ and
$(1-\e, 1-\e), (1-\e, 2\e), (2\e, 1-\e)$, respectively. However,
it is easy to check that these triangles are "too small" in the
sense that $(T_1\cup T_2)-(T_1\cup T_2)$ is not dense in
$[0,1)^2$. Therefore, $\e_0\le \e$, and hence $\e_0\le
\frac{1}{5}$.
\end{proof}

We conclude this note with two remarks.

\medskip

First, notice that the proofs of Corollaries \ref{onefourth} and
\ref{onefifth} follow the same pattern. If $\vu_1, \dots, \vu_n$
and $\e>0$ correspond to a covering of $\R^2$, we slightly
decrease the half-width of the strips to some $\rho<\e$, so that
the non-covered region is still small enough, and then apply
Theorem \ref{thm:main} to conclude that $\e_0\le \rho$. In
Corollary \ref{onefourth} this was done for the trivial covering
$\vu_1=1$, $\e=\frac{1}{2}$ with the choice $\rho=\frac{1}{4}$. In
Corollary \ref{onefifth} we used the covering $\vu_1=1,
\vu_2=e^{2\pi i/3}, \vu_3=e^{4\pi i/3}$ for $\e=\frac{1}{3}$, with
the choice $\rho=\frac{1}{5}$. It is plausible that this can be
done for {\it every} covering, thus reducing the value of $\e$
ever further. This would mean that the set of $\e$'s having the
finite rotation property is open. However, it would still not
prove that $\e_0=0$.

\medskip

Second, Theorem \ref{thm:main} gives us the possibility to
re-consider periodic coverings. For instance, let $\e<\frac{1}{2}$
and take all Pythagorean triples $m_j^2+k_j^2=n_j^2$ with $n_j\le
N$ for some fixed $N$. We know from Section \ref{sec:per} that the
corresponding angles will not give a covering for $\e$. However,
it is possible that the non-covered part $X$ of the plane is small
enough so that Theorem \ref{thm:main} can be invoked to conclude
that $\e$ has the finite rotation property. In fact, it is
possible that this argument works for any $\e>0$ if we choose $N$
large enough, but we could not prove it so far.

\end{document}